\newtheorem{theorem}{Theorem}[section]
\newtheorem{lemma}[theorem]{Lemma}
\newtheorem{proposition}[theorem]{Proposition}
\numberwithin{equation}{section}
\title{Normalized solution to coupled nonhomogeneous nonlinear elliptic system with three wave interaction under unbounded potentials\thanks{Supported by NSFC (12171326, 12171014, 12001382) and KZ202010028048.}}
\author{Mingyang Han\thanks{Corresponding author. Email: myanghan@163.com.}\\ School of Mathematical Sciences, Capital Normal University}
\date{}
\begin{document}
	\maketitle 
	\numberwithin{equation}{section}
	\setcounter{section}{0}
	 \begin{abstract}
	    In this paper we use variational approach to find the normalized solution of the system
		\begin{equation*}
		\begin{cases} - \Delta u + \left( {{V_1}(x) + {\lambda _1}} \right)u = {\mu _1}{{\left| u \right|}^{{p} - 2}}u + \beta vw, & \text { in }\ {\mathbb{R}^3}, \\ 
		 - \Delta v + \left( {{V_2}(x) + {\lambda _2}} \right)v = {\mu _2}{{\left| v \right|}^{{p} - 2}}v + \beta uw, & \text { in }\ {\mathbb{R}^3}, \\ 
		 - \Delta w + \left( {{V_3}(x) + {\lambda _3}} \right)w = {\mu _3}{{\left| w \right|}^{{p} - 2}}w + \beta uv, & \text { in }\ {\mathbb{R}^3},
					\end{cases}
		\end{equation*}
		with the condition
		\begin{equation*}
			\int_{{\mathbb{R}^3}} {u^2 = a} ,\int_{{\mathbb{R}^3}} {v^2 = b} ,\int_{{\mathbb{R}^3}} {w^2 = c},
		\end{equation*}
		where $\mu_{1},\mu_{2},\mu_{3},a,b,c>0$, $\beta >0$, $2<p<10/3$ and $\lambda_1,\lambda_2,\lambda_3 \in \mathbb{R}$ are Lagrangian multipliers. We prove the existence of minimizer for the system with prescribed $L^2$-norm under some conditions on $V_i(x),i=1,2,3.$
 	\end{abstract}
 
	\paragraph{Keywords}  Normalized solution; three wave interaction; unbounded potentials	
	\paragraph{2000 MR Subject Classification} $\ \ $35J20; 35J50; 35J57
	\section{Introduction} 

	In this paper we seek for the normalized solution of the system
	\begin{equation}\label{key1}
		\begin{cases} - \Delta u + \left( {{V_1}(x) + {\lambda _1}} \right)u = {\mu _1}{{\left| u \right|}^{{p} - 2}}u + \beta vw, & \text { in }\ {\mathbb{R}^3}, \\ 
		 - \Delta v + \left( {{V_2}(x) + {\lambda _2}} \right)v = {\mu _2}{{\left| v \right|}^{{p} - 2}}v + \beta uw,& \text { in }\ {\mathbb{R}^3}, \\ 
		 - \Delta w + \left( {{V_3}(x) + {\lambda _3}} \right)w = {\mu _3}{{\left| w \right|}^{{p} - 2}}w + \beta uv,& \text { in }\ {\mathbb{R}^3}, 
			\end{cases}
	\end{equation}
	with the condition
	\begin{equation}\label{key2}
		\int_{{\mathbb{R}^3}} {u^2 = a} ,\int_{{\mathbb{R}^3}} {v^2 = b} ,\int_{{\mathbb{R}^3}} {w^2 = c},
	\end{equation}
	where $\mu_{1},\mu_{2},\mu_{3},a,b,c>0$, $\beta >0$, $2<p<10/3$ and $\lambda_1,\lambda_2,\lambda_{3}$ are Lagrangian multipliers will be determined.
	We prove the system (\ref{key1})-(\ref{key2}) has a normalized solution under some suitable conditions on $V_i(x),\ i=1,2,3$.
	
	Recently, many authors studied the normalized solution of Schr{\"o}dinger systems, for example, Bartsch, Jeanjean, Soave \cite{bartsch2016normalized} and Bartsch, Soave \cite{bartsch2017natural} considered the following system
	\begin{equation}\label{bartsch}
		\begin{cases}
		-\Delta u+\lambda_{1} u=\mu_{1} u^{3}+\beta u v^{2}, & \text { in } \mathbb{R}^{N}, \\ -\Delta v+\lambda_{2} v=\mu_{2} v^{3}+\beta u^{2} v, & \text { in } \mathbb{R}^{N}, \\ \int_{\mathbb{R}^{N}} u^{2}=a, \int_{\mathbb{R}^{N}} v^{2}=b, & 
		\end{cases}
	\end{equation}
	with $\beta>0$ and $\beta<0$ respectively, they found solutions to (\ref{bartsch}) for specific range of $\beta$ depending on $a, b$. Recently, Bartsch, Zhong and Zou \cite{bartsch2021normalized} studied (\ref{bartsch}) for $\beta>0$ belonging to much better ranges independent of the masses $a$ and $b$, they adopted a new approach based on the fixed point index in cones and the bifurcation theory. For more results about (\ref{bartsch}), we refer to \cite{bartsch2018normalized,bartsch2019multiple} and the references therein. When considering (\ref{bartsch}) with more general exponents of the following type:
	\begin{equation}\label{general}
	\begin{cases}-\Delta u+\lambda_{1} u=\mu_{1}|u|^{p-2} u+\beta r_{1}|u|^{r_{1}-2}|v|^{r_{2}} u, & \text { in } \mathbb{R}^{N}, \\ -\Delta v+\lambda_{2} v=\mu_{2}|v|^{q-2} v+\beta r_{2}|u|^{r_{1}}|v|^{r_{2}-2} v, & \text { in } \mathbb{R}^{N}, \\ \int_{\mathbb{R}^{N}} u^{2}=a, \int_{\mathbb{R}^{N}} v^{2}=b, & \end{cases}
	\end{equation}
	we note that Gou and Jeanjean \cite{gou2016existence} proved the precompactness of the minimizing sequences up to translation for mass-subcritical problems. Yun-Zhang \cite{yun2022normalized} considered the system (\ref{general}) with linear couplings. Liu-Tian \cite{liu2020normalized} extended (\ref{general}) to the system of three equations. For more works, see, e.g.\cite{gou2018multiple}.
	
	The Schr{\"o}dinger systems like (\ref{key1}) with quadratic interaction have wide applications in physics, such as nonlinear optics, Bose-Einstein condensates and plasma physics. 
	The system
	\begin{equation}\label{sanyuan}
		\left\{ {\begin{array}{*{20}{c}}
				{i{\partial _t}{v_1} =  - \Delta {v_1} - {\mu _1}{{\left| {{v_1}} \right|}^{p - 2}}{v_1} - \beta {v_2}{v_3}}, \\ 
				{i{\partial _t}{v_2} =  - \Delta {v_2} - {\mu _2}{{\left| {{v_2}} \right|}^{p - 2}}{v_2} - \beta {v_1}{v_3}}, \\ 
				{i{\partial _t}{v_3} =  - \Delta {v_3} - {\mu _3}{{\left| {{v_3}} \right|}^{p - 2}}{v_3} - \beta {v_1}{v_2}}, 
		\end{array}} \right.
	\end{equation}
	and system (\ref{key1}) are reduced systems studied in \cite{colin2009instability,colin2006stability,colin2012bifurcation,pomponio2010ground}. These are unstable phenomena that occur when the incident laser field propagates into the plasma. The model was first introduced in document \cite{russell1999nonlinear} to describe Raman scattering in plasma. In reference \cite{colin2004quasilinear}, a modified model (\ref{sanyuan}) was derived to describe the nonlinear interaction between laser beam and plasma. From the physical point of view, when the incident laser field enters the plasma, it will be backscattered by the Raman type process. These two waves interact to generate electron plasma waves. These three waves combine to produce changes in ion density, which itself has an impact on the three waves. The system describing this phenomenon consists of three Schr{\"o}dinger equations coupled to the wave equation and is read in an appropriate dimensionless form. For a complete description of the model and an accurate description of the physical coefficients, we refer to \cite{colin2004quasilinear,colin2006numerical,colin2009instability,colin2006stability}. Systems similar to (\ref{sanyuan}) also appear as optical models with quadratic nonlinearity (see \cite{yew2000stability,yew2001multipulses}). 
	
	When $V_i(x)  \equiv 0,i=1,2,3$, the system (\ref{key1}) is reduced to 
	\begin{equation}\label{chang}
		\begin{cases} - \Delta u + {\lambda _1}u = {\mu _1}{{\left| u \right|}^{p - 2}}u + \beta vw,& \text { in }\ {\mathbb{R}^3}, \\ 
		 - \Delta v + {\lambda _2}v = {\mu _2}{{\left| v \right|}^{p - 2}}v + \beta uw,& \text { in }\ {\mathbb{R}^3},\\ 
		 - \Delta w + {\lambda _3}w = {\mu _3}{{\left| w \right|}^{p - 2}}w + \beta uv,&\text { in }\ {\mathbb{R}^3}, 
			\end{cases}
	\end{equation}
	which was studied by many authors. For example, Colin and his collaborators studied the orbital stability, ground state solution and multiple solutions in \cite{colin2009instability,colin2006stability,colin2012bifurcation}. In Wang \cite{wang2017solitary}, he studied the positive ground solution, bifurcation phenomenon, and asymptotic behavior of solutions under some suitable conditions on $\lambda$, $\mu$ and $\beta$. When two of $u,v,w$ are equal, Wang \cite{wang2022existence} studied the normal solution of (\ref{chang}).
	
	When $V_i(x) \not \equiv 0$, under some other conditions on $V_i(x)$, there are many results of normalized solutions of Schr{\"o}dinger systems. When the system has two equations, Chen-Zou \cite{chen2021normalized} which inspired the result of our article, considered a system with linear coupling terms. They got that when $V_i(x),i=1,2$ satisfy $(V1)(V2)$ or $(V1)(V3)$, the system
	$$
		\begin{cases}
				 - \Delta u + ({V_1}(x) + {\lambda _1})u = {\mu _1}{{\left| u \right|}^{p - 2}}u + \beta v,&\text{in}\  {\mathbb{R}^N}, \\ 
				 - \Delta v + ({V_2}(x) + {\lambda _2})v = {\mu _2}{{\left| v \right|}^{q - 2}}v + \beta u,&\text{in}\ {\mathbb{R}^N}, 
		\end{cases}
	$$
	has a ground state solution with the constraint 
	$$
		\int_{{\mathbb{R}^N}} {{u^2}}  = a,\int_{{\mathbb{R}^N}} {{v^2}}  = b,
	$$
	where 
	\begin{itemize}
		\item[$(V1)$] $V_i(x) \in C(\mathbb{R}^3 ) $, $V_i(0) =\mathop {\min }\limits_{x \in {\mathbb{R}^3}} {V_i}(x) = {c_i} >  - \infty $ for $i =1, 2,3,$ 
		\item[$(V2)$] $\mathop {\lim }\limits_{\left| x \right| \to \infty } {V_i}(x) =  + \infty $ for $i =1, 2,3.$
		\item[$(V3)$] $\mathop {\lim }\limits_{\left| x \right| \to \infty } {V_i}(x) = \mathop {\sup }\limits_{x \in {\mathbb{R}^N}} {V_i}(x) = :{V_{i,\infty }} \in \left( {{c_i}, + \infty } \right)$, for $i = 1,2.$
	\end{itemize}
	When considering the case where the system has three equations, the authors in \cite{kurata2021asymptotic,pomponio2010ground} studied the existence of ground state solution. They all assumed  
	$$
	0 < {C_i} < {V_i}(x) \leq  \mathop {\lim }\limits_{\left| y \right| \to \infty } {V_i}(y) = :{V_{i,\infty }} \in \mathbb{R},i=1,2,3,
	$$ 
	for almost every $x \in \mathbb{R}^N$. In \cite{ikoma2020compactness,kurata2022variational,osada2022existence}, the authors studied the existence and the orbital stability of minimizers and the precompactness of minimizing sequences under multiconstraint conditions for the problem (\ref{key1}). They all assumed that $V_i (x)$ is non-negative or non-positive. In \cite{kurata2022variational}, the authors studied the system
	\begin{equation}
	\left\{ {\begin{array}{*{20}{c}}
			{ - \Delta {u_1} + {V_1}(x){u_1} = {\mu _1}{{\left| {{u_1}} \right|}^{p - 2}}{u_1} + \beta {u_2}\overline {{u_3}} ,\quad \text { in }\ {\mathbb{R}^N},} \\ 
			{ - \Delta {u_2} + {V_2}(x){u_2} = {\mu _2}{{\left| {{u_2}} \right|}^{p - 2}}{u_2} + \beta {u_1}\overline {{u_3}} ,\quad \text { in }\ {\mathbb{R}^N},} \\ 
			{ - \Delta {u_3} + {V_3}(x){u_3} = {\mu _3}{{\left| {{u_3}} \right|}^{p - 2}}{u_3} + \beta {u_1}{u_2},\quad \text { in }\ {\mathbb{R}^N},} \\ 
			{{u_1},{u_2},{u_3} \in {H^1}({\mathbb{R}^N};\mathbb{C}),\left| {{u_1}} \right|_2^2 = \gamma ,\left| {{u_2}} \right|_2^2 = \mu ,\left| {{u_3}} \right|_2^2 = s,} 
	\end{array}} \right.
	\end{equation}
	with
	\begin{itemize}
		\item[$(\widetilde{V1})$]  $V \in L^\infty (\mathbb{R}^N; \mathbb{R})$; 
		\item[$(\widetilde{V2})$] $V (x) \leq \mathop {\lim }\limits_{\left| y \right| \to \infty } V(y) = 0$ for almost every $x \in \mathbb{R}^N $; 
		\item[$(\widetilde{V3})$] $V(-x_1, x') = V (x_1, x')$ for almost every $x_1 \in \mathbb{R}$ and $x' \in \mathbb{R}^{N-1}$, $V(s, x') \leq V(t, x')$ for almost every $s, t \in \mathbb{R}$ with $0 \leq s < t$ and $x' \in \mathbb{R}^{N-1}$.
	\end{itemize}	
	They defined
	\begin{equation*}
		\begin{aligned}
			E(\vec{u}):=&\frac{1}{2} \sum_{j=1}^{3} \int_{\mathbb{R}^{N}}\left|\nabla u_{j}\right|^{2} +\frac{1}{2} \sum_{j=1}^{3} \int_{\mathbb{R}^{N}} V_{j}(x)\left|u_{j}\right|^{2}  \\&
			-\frac{1}{p+1} \sum_{j=1}^{3} \int_{\mathbb{R}^{N}}\left|u_{j}\right|^{p+1} -\alpha \operatorname{Re} \int_{\mathbb{R}^{N}} u_{1} u_{2} \bar{u}_{3} 
		\end{aligned}
	\end{equation*} 
	where $\vec{u}=(u_1,u_2,u_3)$.
	And they let 
	\begin{equation*}
		I(\gamma ,\mu ,\operatorname{s} ): = \inf \left\{ {E(u_1,u_2,u_3 ):(u_1,u_2,u_3)  \in {H^1}({\mathbb{R}^N};{\mathbb{C}^3}),\left| u_1 \right|_2^2 = \gamma ,\left| u_2 \right|_2^2 = \mu ,\left| u_3 \right|_2^2 = s} \right\}.
	\end{equation*}
	They got the following proposition.
	\begin{proposition}
		Suppose $\gamma, \mu, s>0, N=1,2,3$, $1<p<1+4 / N, \beta>0$ and $V_{j}(x)(j=1,2,3)$ satisfy the conditions $(\widetilde{V1})$-$(\widetilde{V3})$, respectively. Then, any minimizing sequence $\left\{\vec{u}_{n}\right\}_{n=1}^{\infty}$ for $I(\gamma, \mu, s)$ is relatively compact in $H^{1}\left(\mathbb{R}^{N} ; \mathbb{C}^{3}\right)$ up to translations. That is, there exist $\left\{y_{n}\right\}_{n=1}^{\infty} \subset \mathbb{R}^{N}$ and $\vec{u} \in H^{1}\left(\mathbb{R}^{N} ; \mathbb{C}^{3}\right)$ such that $\left\{\vec{u}_{n}\left(\cdot+y_{n}\right)\right\}_{n=1}^{\infty}$ has a subsequence converging strongly in $H^{1}\left(\mathbb{R}^{N} ; \mathbb{C}^{3}\right)$ to $\vec{u}$. Moreover, $\vec{u}$ is a minimizer for $I(\gamma, \mu, s)$.
			
	\end{proposition}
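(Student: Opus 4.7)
The plan is a concentration--compactness argument on the mass density $\rho_n(x) := \sum_{j=1}^{3}|u_{j,n}(x)|^2$, whose total integral equals the conserved quantity $\gamma+\mu+s$ along any minimizing sequence $\{\vec u_n\}$. First I would establish coercivity and strict negativity of $I(\gamma,\mu,s)$. Since $1<p<1+4/N$ is mass--subcritical, the Gagliardo--Nirenberg inequality $\|u\|_{p+1}^{p+1}\le C\|\nabla u\|_2^{N(p-1)/2}\|u\|_2^{p+1-N(p-1)/2}$ has gradient exponent strictly less than $2$; the cubic coupling $|\operatorname{Re}\int u_1u_2\bar u_3|\le \tfrac{1}{3}\sum_j\|u_j\|_3^3$ admits the analogous interpolation with gradient exponent $\le 3/2<2$ in dimension $N\le 3$. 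Combined with $V_j\in L^\infty$, Young's inequality gives $E(\vec u)\ge \tfrac{1}{4}\sum_j\|\nabla u_j\|_2^2 - C$ on the constraint, so $I(\gamma,\mu,s)>-\infty$ and every minimizing sequence is bounded in $H^1$; testing $E$ on a rescaled real profile with the coupling made dominant shows $I(\gamma,\mu,s)<0$.

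Next I would apply Lions' concentration--compactness alternative to $\rho_n$. Vanishing is ruled out because it forces $u_{j,n}\to 0$ in $L^{p+1}$ and in $L^3$, so the nonlinear and coupling contributions vanish; combined with non--positivity of $V_j$ from $(\widetilde V2)$ and nonnegativity of the kinetic term, this yields $\liminf E(\vec u_n)\ge 0$, contradicting $I<0$. To exclude dichotomy, let $I_\infty(\cdot)$ denote the analogous infimum with $V_j\equiv 0$; non--positivity of $V_j$ gives $I\le I_\infty$, and a standard scaling argument (exploiting $p<1+4/N$) on near--minimizers of $I_\infty$ establishes the strict subhomogeneity $I_\infty(\alpha(\gamma,\mu,s))<\alpha\,I_\infty(\gamma,\mu,s)$ for $\alpha>1$. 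Together with translation invariance of $I_\infty$, this yields the strict subadditivity $I(\gamma,\mu,s)<I(\gamma_1,\mu_1,s_1)+I_\infty(\gamma_2,\mu_2,s_2)$ for every nontrivial partition of the masses, which rules out dichotomy.

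The remaining compactness alternative supplies $y_n\in\mathbb{R}^N$ with $\vec u_n(\cdot+y_n)\rightharpoonup \vec u$ weakly in $H^1$ and all three $L^2$--masses preserved; the symmetry built into $(\widetilde V3)$ permits restricting $y_n$ to have vanishing first coordinate via Steiner--type considerations in $x_1$. Weak lower semicontinuity of the kinetic term, Fatou/dominated--convergence treatment of the $V_j|u_j|^2$ contributions (splitting the cases $|y_n|$ bounded and $|y_n|\to\infty$, in which $V_{j,\infty}=0$), and strong convergence on bounded sets of the subcritical $L^{p+1}$ and cubic terms give $E(\vec u)\le\liminf E(\vec u_n(\cdot+y_n))=I$; hence $\vec u$ is a minimizer, and equality of the kinetic norms then upgrades weak to strong $H^1$ convergence. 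The principal obstacle is the strict subadditivity in the previous step: the cubic three--wave coupling scales differently from the $L^{p+1}$ nonlinearity, so the scaling test must be performed jointly on the triple, and the comparison $I<I_\infty$ must be tracked carefully in the subcases where part of the mass would otherwise escape to spatial infinity.
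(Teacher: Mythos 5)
This proposition is not proved in the paper at all: it is quoted verbatim from the reference \cite{kurata2022variational} as background, so there is no in-paper proof to compare against. Judged on its own terms, your concentration--compactness outline is the natural (and presumably the cited authors') strategy, but as written it has one fixable slip and one genuine gap. The slip is in the vanishing step: non-positivity of $V_j$ from $(\widetilde{V2})$ works in the \emph{wrong} direction, since $\int V_j|u_{j,n}|^2\le 0$ pushes $E$ down, not up, so it cannot by itself give $\liminf E(\vec u_n)\ge 0$. What you actually need is that the potential terms \emph{vanish} along a vanishing sequence: since $V_j\in L^\infty$ and $V_j(x)\to 0$ as $|x|\to\infty$, split $\int V_j|u_{j,n}|^2$ into a large ball (where vanishing kills the local $L^2$ mass) and its complement (where $|V_j|$ is small); this is standard but it is the decay of $V_j$, not its sign, that does the work.

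The genuine gap is the exclusion of dichotomy. Strict subhomogeneity $I_\infty(\alpha(\gamma,\mu,s))<\alpha I_\infty(\gamma,\mu,s)$ obtained by scaling only controls \emph{proportional} splittings of the mass vector, whereas dichotomy can split $(\gamma,\mu,s)=(\gamma_1,\mu_1,s_1)+(\gamma_2,\mu_2,s_2)$ in an arbitrary, non-proportional way, including degenerate cases such as $(\gamma_2,\mu_2,s_2)=(0,0,s)$ in which one component's mass escapes entirely. Precisely in those cases the trilinear coupling $\operatorname{Re}\int u_1u_2\bar u_3$ degenerates for the escaping piece, and the needed strict binding inequality $I(\gamma,\mu,s)<I(\gamma_1,\mu_1,s_1)+I_\infty(\gamma_2,\mu_2,s_2)$ must be proved by constructing genuinely coupled competitors (using $\beta>0$) and by comparing $I$ with $I_\infty$ through $(\widetilde{V2})$; ``translation invariance of $I_\infty$'' plus one-parameter scaling does not yield it. You yourself flag this as ``the principal obstacle,'' but flagging it is not resolving it: without these strict inequalities for every nontrivial splitting (and the accompanying continuity/monotonicity of $I$ in each mass, including zero sub-masses), the dichotomy case is not excluded and the compactness conclusion does not follow. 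The role you assign to $(\widetilde{V3})$ (restricting $y_n$ via Steiner-type symmetrization) is likewise only sketched, but that is secondary; the subadditivity step is where the proof is actually incomplete.
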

	Especially, when $V_i(x) \equiv 0$, system (\ref{chang}) with the restriction of (\ref{key2}) has a solution. 
	
	Let $H^1 (\mathbb{R}^3)$ be the usual Sobolev space and denote its norm by
	\begin{equation*}
		\|u\|:=(|\nabla u|^2_2+|u|^2_2)^{1/2},
	\end{equation*} 
	where $|u|_q:=|u|_{q,\mathbb{R}^3}:=(\int_{{\mathbb{R}^3}} |u|^q)^{1/q}$, $q>1$ and we know that $H^1(\mathbb{R}^3) \subset L^q(\mathbb{R}^3)$, $2 \leq q \leq 6$.
	Let $H=H^1(\mathbb{R}^3)  \times H^1(\mathbb{R}^3) \times H^1 (\mathbb{R}^3)$	and we set 
	\begin{equation*}
		S(a,b,c ): = \left\{ {(u,v,w) \in \mathcal{H}:|u|_2^2 = {a},|v|_2^2 = {b},|w|_2^2 = {c}} \right\}, 
	\end{equation*}
	where 
	$$
	\mathcal{H}: = \left\{ {\left( {u,v,w} \right) \in H:\left| {\int_{{\mathbb{R}^3}} {{V_1}(x){u^2}} } \right| < \infty ,\left| {\int_{{\mathbb{R}^3}} {{V_2}(x){v^2}} } \right| < \infty ,\left| {\int_{{\mathbb{R}^3}} {{V_3}(x){w^2}} } \right| < \infty } \right\}.
	$$
	We defined 
	\begin{equation}\label{min}
		m(a,b,c ): = \mathop {\inf }\limits_{(u,v,w) \in S(a,b,c) } J(u,v,w).
	\end{equation}
	From standard variational arguments we know that critical points of the following functional on $S(a,b,c) $ are weak solutions of (\ref{key1})-(\ref{key2}),
	\begin{equation*}
		\begin{aligned}
			J(u,v,w) =& \frac{1}{2}\int_{{\mathbb{R}^3}} {\left( {{{\left| {\nabla u} \right|}^2} + {V_1}{u^2}} \right)}  + \frac{1}{2}\int_{{\mathbb{R}^3}} {\left( {{{\left| {\nabla v} \right|}^2} + {V_2}{v^2}} \right)}  + \frac{1}{2}\int_{{\mathbb{R}^3}} {\left( {{{\left| {\nabla w} \right|}^2} + {V_3}{w^2}} \right)} \\&- \frac{{{\mu _1}}}{p}\left| u \right|_p^p - \frac{{{\mu _2}}}{p}\left| v \right|_p^p - \frac{{{\mu _2}}}{p}\left| w \right|_p^p - \beta\int_{{\mathbb{R}^3}} {uvw} .
		\end{aligned}
	\end{equation*}
	For comparison, we define
	\begin{equation*}
		\begin{aligned}
			{J_\infty }(u,v,w) =& \frac{1}{2}\int_{{\mathbb{R}^3}} {\left( {{{\left| {\nabla u} \right|}^2} + {{\left| {\nabla v} \right|}^2} + {{\left| {\nabla w} \right|}^2}} \right)} \\& - \frac{{{\mu _1}}}{p}\left| u \right|_p^p - \frac{{{\mu _2}}}{p}\left| v \right|_p^p - \frac{{{\mu _3}}}{p}\left| w \right|_p^p - \beta\int_{{\mathbb{R}^3}} {uvw} ,
		\end{aligned}
	\end{equation*}
	\begin{equation}\label{defination_of_S_infty}
		S_\infty(a,b,c)=\left\{ {(u,v,w) \in H:|u|_2^2 = {a},|v|_2^2 = {b},|w|_2^2 = {c} } \right\},
	\end{equation}
	and
	\begin{equation*}
		m_\infty (a,b,c):=\mathop {\inf }\limits_{(u,v,w)\in S_\infty(a,b,c)} {J_\infty }(u,v,w).
	\end{equation*}
	It is easy to see $S(a,b,c) \subset S_\infty (a,b,c)$.
	
	The following theorem is the main result of our paper.
	\begin{theorem}\label{th2}
		Let $\beta>0$, $\mu_{1},\mu_{2},\mu_3>0 $, $2<p<10/3$. Suppose $V_i(x),i=1,2,3$ satisfy $(V1)$ and $(V2)$, Then $m(a,b,c)$ is attained, hence there exist a ground state solution to $(\ref{key1})$ with the constraint $\left( {\overline u ,\overline v ,\overline w} \right) \in S(a,b,c)$. Moreover $\overline{u}, \overline{v}, \overline{w} >0$.
	\end{theorem}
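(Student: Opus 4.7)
The plan is to apply the direct method of the calculus of variations to the minimization problem \eqref{min}. The key technical ingredient will be the compact embedding supplied by $(V1)+(V2)$: after replacing $V_i$ by $V_i-c_i+1\geq 1$ (which only shifts $\lambda_i$ by a constant), the weighted Hilbert space $E_i:=\{u\in H^1(\mathbb{R}^3):\int V_iu^2<\infty\}$ with norm $\|u\|_i^2:=\int(|\nabla u|^2+V_iu^2)$ embeds compactly into $L^q(\mathbb{R}^3)$ for every $q\in[2,6)$. This is the standard Bartsch--Wang type lemma: Rellich gives compactness on any ball, while outside $B_R$ the estimate $\int_{|x|>R}u^2\leq\tfrac{1}{M_R}\int_{|x|>R}V_iu^2$ with $M_R:=\inf_{|x|>R}V_i\to\infty$ forces uniform tail smallness.

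First I would show that $J$ is bounded below and coercive on $S(a,b,c)$. The Gagliardo--Nirenberg inequalities in $\mathbb{R}^3$ give $|u|_p^p\leq C|\nabla u|_2^{3(p-2)/2}a^{(10-3p)/4}$ with exponent $3(p-2)/2<2$ since $p<10/3$, and $|u|_3\leq C|\nabla u|_2^{1/2}|u|_2^{1/2}$, so by H\"older and AM--GM
\begin{equation*}
\Bigl|\int_{\mathbb{R}^3}uvw\Bigr|\leq |u|_3|v|_3|w|_3\leq C(abc)^{1/4}\bigl(|\nabla u|_2^{3/2}+|\nabla v|_2^{3/2}+|\nabla w|_2^{3/2}\bigr).
\end{equation*}
Both nonlinear contributions are thus sub-quadratic in the gradient, while $(V1)$ bounds the three potential integrals from below on $S(a,b,c)$; hence $J$ is coercive and any minimizing sequence $(u_n,v_n,w_n)$ is bounded in $\mathcal{H}$. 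Extracting a weakly convergent subsequence $(u_n,v_n,w_n)\rightharpoonup(\bar u,\bar v,\bar w)$ in $\mathcal{H}$ and applying the compact embedding componentwise yields strong convergence in $L^q$ for every $q\in[2,6)$; in particular the mass constraints are preserved, $|u_n|_p^p\to|\bar u|_p^p$ (and analogously for $v,w$), and $\int u_nv_nw_n\to\int\bar u\bar v\bar w$ by H\"older with strong $L^3$ convergence. Weak lower semicontinuity of the quadratic form then yields $J(\bar u,\bar v,\bar w)\leq\liminf J(u_n,v_n,w_n)=m(a,b,c)$, so $(\bar u,\bar v,\bar w)\in S(a,b,c)$ attains the infimum.

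For positivity, $\beta>0$ together with $|\int uvw|\leq \int|u||v||w|$ shows that $J(|\bar u|,|\bar v|,|\bar w|)\leq J(\bar u,\bar v,\bar w)$ (the quadratic and $L^p$ pieces are invariant under taking absolute values), so I may assume $\bar u,\bar v,\bar w\geq 0$. The Euler--Lagrange equations \eqref{key1} then read $-\Delta\bar u+(V_1+\lambda_1)\bar u=\mu_1\bar u^{p-1}+\beta\bar v\bar w\geq 0$ and similarly for $\bar v,\bar w$; since each $V_i$ is continuous and hence locally bounded, the strong maximum principle applied to each equation upgrades the nonnegative components to strictly positive ones. The main obstacle I anticipate is a clean verification of the compact embedding under $(V2)$ combined with a uniform sub-quadratic bound on the cubic coupling; note that, in contrast to the $(V3)$ scenario, no comparison between $m(a,b,c)$ and $m_\infty(a,b,c)$ is needed here because $(V2)$ supplies compactness directly.
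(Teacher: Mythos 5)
Your proposal is correct, but it follows a genuinely different route from the paper. You obtain compactness from the Bartsch--Wang type compact embedding of the shifted weighted spaces $E_i$ into $L^q(\mathbb{R}^3)$, $2\le q<6$, and then conclude by weak lower semicontinuity of the (shifted) quadratic form together with strong convergence of the $L^p$-terms and of $\int uvw$; no comparison functional is needed. The paper instead never invokes this embedding: its key step (Lemma \ref{wei_shi_jing_xing}) is a contradiction argument showing that if a minimizing sequence fails to converge in $L^2\times L^2\times L^2$, then the escaping mass combined with $(V2)$ forces $\int(V_1u_n^2+V_2v_n^2+V_3w_n^2)\to\infty$, and the comparison $J=J_\infty+\tfrac12\int\sum V_iu_n^2\ge m_\infty(a,b,c)+\tfrac12\int\sum V_iu_n^2$ (using Lemma \ref{m_infty}) drives $m(a,b,c)$ to $+\infty$; strong $H$-convergence is then recovered via the Brezis--Lieb splittings (Lemmas \ref{error}, \ref{B-L}) and a Fatou argument for the potential terms (Lemma \ref{strong_convergence}). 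Your route is shorter and more classical, avoids $J_\infty$, $m_\infty$ and the Brezis--Lieb lemmas entirely, and it implicitly shows that the sign of $c_i$ is not a real obstruction at fixed masses, since replacing $V_i$ by $V_i-c_i+1$ changes $J$ only by a constant on $S(a,b,c)$; the paper's route, in exchange, is self-contained about compactness of the weighted space and delivers the stronger by-product that every minimizing sequence is relatively compact in $H$, and it is the kind of argument that also adapts when one must compare with a problem at infinity (as under $(V3)$). Two points you should make explicit to close your sketch: (i) the compact embedding can only be applied once you know the minimizing sequence is bounded in the weighted norms, i.e.\ $\int V_iu_n^2$ is bounded, which follows from the coercivity estimate because each $\int V_iu_n^2$ is bounded below by $c_i a$ (resp.\ $c_i b$, $c_i c$) while the gradient and nonlinear terms are controlled; (ii) in the maximum-principle step the coefficient $V_i+\lambda_i$ may change sign, so one should apply the strong maximum principle to $-\Delta \bar u+(V_1+\lambda_1)^+\bar u\ge 0$ (using $\bar u\ge0$ and local boundedness of $V_1$), which is the same level of detail the paper leaves implicit.
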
 
	
	Note that we do not assume $c_i>0$ in $(V1)$, therefore the space we use has no compactness although $V(x)$ is coercive. The loss of compactness caused major difficulties, and we restored compactness through a series of variational estimates.
	In Section 2, we present necessary notations and give the some preparations on problem (\ref{key1})-(\ref{key2}). The proof of Theorem \ref{th2} will be given in Section 3. Throughout the paper we use the notation $\left|  \cdot  \right|_p$ to denote the $L^p(\mathbb{R}^3)$, and we simply write $H^1=H^1(\mathbb{R}^3),H=H^1(\mathbb{R}^3) \times H^1(\mathbb{R}^3) \times H^1(\mathbb{R}^3)$. The symbol $\left\|  \cdot  \right\|$ denotes the norm in $H^1$ or $H$. The notation $\rightharpoonup$ denotes weak convergence in $H^1$ or $H$. Capital latter $C$ stands for positive constant which may depend on $p, q$, whose precise value can change from line to line. 

	\section{Some preparations} 
	In this section, we prove some lemmas which play important roles in the proof of our main result. We work on the space $H$, and we need the following basic Gagliardo-Nirenberg's inequality from \cite{adams2003sobolev}.
	\begin{lemma}[Gagliardo-Nirenberg inequality]\label{GN}
		For any $u \in H^1(\mathbb{R}^3)$ and $q\in [2,6)$, we have
		\begin{equation}
			|u|_q \leq C_q|\nabla u|^{\gamma_{q} } _2|u|^{1-\gamma_{q}}_2,
		\end{equation} 
		where $\gamma_{q}=\frac{3(q-2)}{2q}$.
	\end{lemma}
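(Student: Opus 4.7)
The plan is to reduce the inequality to the combination of two well-known facts: log-convexity of Lebesgue norms (Hölder interpolation between the endpoints $L^2$ and $L^6$) and the critical Sobolev embedding $H^1(\mathbb{R}^3)\hookrightarrow L^6(\mathbb{R}^3)$. A quick scaling heuristic confirms that the exponent $\gamma_q=3(q-2)/(2q)$ is the only one for which the claimed inequality can be invariant under the rescaling $u(x)\mapsto u(\lambda x)$: under this rescaling $|u|_q$ scales like $\lambda^{-3/q}$, $|\nabla u|_2$ like $\lambda^{-1/2}$, and $|u|_2$ like $\lambda^{-3/2}$, and matching powers forces exactly $\gamma_q=3/2-3/q$.

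Concretely, first I would find the interpolation exponent $\theta\in(0,1)$ satisfying $\tfrac{1}{q}=\tfrac{\theta}{2}+\tfrac{1-\theta}{6}$. Solving gives $\theta=(6-q)/(2q)$ and hence $1-\theta=3(q-2)/(2q)=\gamma_q$. Hölder's inequality applied to $|u|^q=|u|^{q\theta}\cdot |u|^{q(1-\theta)}$ with exponents $2/(q\theta)$ and $6/(q(1-\theta))$ then yields the interpolation bound
\begin{equation*}
|u|_q\;\le\;|u|_2^{\theta}\,|u|_6^{1-\theta}\;=\;|u|_2^{1-\gamma_q}\,|u|_6^{\gamma_q}.
\end{equation*}
The range $q\in[2,6)$ guarantees $\theta\in(0,1]$, so the application of Hölder is legitimate and the case $q=2$ is trivial.

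Next I would invoke the Sobolev inequality in $\mathbb{R}^3$, $|u|_6\le S\,|\nabla u|_2$ for all $u\in\mathcal{D}^{1,2}(\mathbb{R}^3)$ (equivalently $u\in H^1(\mathbb{R}^3)$), with Sobolev constant $S>0$. Inserting this into the interpolation estimate gives
\begin{equation*}
|u|_q\;\le\;S^{\gamma_q}\,|\nabla u|_2^{\gamma_q}\,|u|_2^{1-\gamma_q},
\end{equation*}
and setting $C_q:=S^{\gamma_q}$ completes the proof.

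The only non-elementary ingredient is the critical Sobolev embedding itself, which is the real substance of the inequality; the interpolation step is purely algebraic. If one wished to avoid quoting the Sobolev inequality as a black box, the standard route is to prove it via the Gagliardo–Nirenberg–Sobolev argument for $u\in C_c^\infty(\mathbb{R}^3)$ (bounding $|u|$ in each coordinate direction by $\int|\partial_i u|$, applying Loomis–Whitney or iterated Hölder to obtain $|u|_{3/2}\le C\prod_i|\partial_i u|_1^{1/3}$, then substituting $|u|^\alpha$ for suitable $\alpha$ to promote it to the $L^6$–$L^2$ form) and then pass to $H^1(\mathbb{R}^3)$ by density. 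With that reference in hand the remainder of the proof is the Hölder step above, so nothing further is required beyond this combination.
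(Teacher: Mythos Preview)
Your argument is correct: interpolating by H\"older between $L^2$ and $L^6$ and then applying the critical Sobolev embedding $|u|_6\le S|\nabla u|_2$ in $\mathbb{R}^3$ yields exactly the stated inequality with $\gamma_q=3(q-2)/(2q)$, and your algebra checks out. The paper itself does not give a proof of this lemma at all---it merely quotes the inequality from a standard reference---so your proposal actually supplies more than the paper does; there is nothing to compare.
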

	\begin{lemma}\label{coercive}
		$J(u,v,w)$ is coercive and bounded from below on $S(a,b,c)$.
	\end{lemma}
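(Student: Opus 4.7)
The plan is to bound $J(u,v,w)$ from below by an expression that grows like $T := |\nabla u|_2^2 + |\nabla v|_2^2 + |\nabla w|_2^2$ as $T \to \infty$. Since the $L^2$ norms are fixed on $S(a,b,c)$, the $H$-norm of $(u,v,w)$ blows up precisely when $T$ does, so such a bound gives both coercivity and boundedness from below simultaneously.

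First I would dispose of the potential terms: by $(V1)$ we have $V_i(x) \geq c_i$, hence $\tfrac{1}{2}\int V_1 u^2 + \tfrac{1}{2}\int V_2 v^2 + \tfrac{1}{2}\int V_3 w^2 \geq \tfrac{1}{2}(c_1 a + c_2 b + c_3 c)$, which is a constant on $S(a,b,c)$. The kinetic piece contributes exactly $\tfrac{1}{2}T$. The only genuinely negative contributions come from the three $L^p$ self-interaction terms and the trilinear coupling $\beta\int uvw$, and both must be shown to be of strictly sub-quadratic order in $T^{1/2}$.

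Next I would bound the $L^p$ terms via Lemma \ref{GN}: $|u|_p^p \leq C|\nabla u|_2^{p\gamma_p}|u|_2^{p(1-\gamma_p)} = C(a)|\nabla u|_2^{p\gamma_p}$, where $p\gamma_p = \tfrac{3(p-2)}{2} < 2$ because $p < 10/3$. Thus $\tfrac{\mu_i}{p}|\cdot|_p^p \lesssim T^{p\gamma_p/2}$ with exponent $< 1$. For the coupling term, Hölder gives $|\int uvw| \leq |u|_3|v|_3|w|_3$, and applying Lemma \ref{GN} at $q=3$ (so $\gamma_3 = 1/2$) yields $|u|_3 \leq C|\nabla u|_2^{1/2}|u|_2^{1/2}$, and similarly for $v,w$. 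Multiplying and using $L^2$ normalization and the elementary inequality $xyz \leq \tfrac{1}{3}(x^3+y^3+z^3)$ with $x = |\nabla u|_2^{1/2}$ etc., I get $|\beta\int uvw| \leq C(\beta,a,b,c) T^{3/4}$, again with exponent $< 1$.

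Putting it all together yields $J(u,v,w) \geq \tfrac{1}{2}T - C_1 T^{p\gamma_p/2} - C_2 T^{3/4} - C_3$, and since both sub-powers of $T$ are strictly less than $1$, Young's inequality absorbs them into $\tfrac{1}{4}T$ plus constants, giving $J(u,v,w) \geq \tfrac{1}{4}T - C_4$. This is clearly coercive in $T$ and bounded below on $S(a,b,c)$. The argument is essentially routine; the only delicate point is the subcriticality of the trilinear coupling, which works here because $N = 3$ makes $q = 3$ mass-subcritical (i.e. $3 < 10/3$), and the self-interaction subcriticality is exactly the standing assumption $p < 10/3$. No serious obstacle is expected.
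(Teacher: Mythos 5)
Your proposal is correct and follows essentially the same route as the paper: Gagliardo--Nirenberg with the mass-subcritical exponent $\tfrac{3(p-2)}{2}<2$ for the $L^p$ terms, control of $\beta\int uvw$ through $L^3$ norms (the paper uses Young's inequality $uvw\le\tfrac13(|u|^3+|v|^3+|w|^3)$ before applying GN, you use H\"older first --- an immaterial difference), and the lower bound $\int V_i u^2\ge c_i a$ from $(V1)$. The resulting estimate $J\ge\tfrac12 T-C_1T^{3(p-2)/4}-C_2T^{3/4}-C_3$ is exactly the paper's inequality (3.4)-type bound, so nothing further is needed.
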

	\begin{proof}
		By Lemma \ref{GN} and $2<p<10/3$ we have 
		\begin{equation}\label{p-GN}
			\int_{{\mathbb{R}^3}} {{{\left| u \right|}^{{p}}} \leq C({p},{a})\left| {\nabla u} \right|_2^{\frac{{3(p - 2)}}{2}}}.
		\end{equation}
		These are also similar to $v$ and $w$.
		
		By Young inequality and (\ref{p-GN}) we have
		\begin{equation*}
			\int_{{\mathbb{R}^3}} {uvw}  \leq \frac{1}{3}\int_{{\mathbb{R}^3}} {{{\left| u \right|}^3}}  + \frac{1}{3}\int_{{\mathbb{R}^3}} {{{\left| v \right|}^3}}  + \frac{1}{3}\int_{{\mathbb{R}^3}} {{{\left| w \right|}^3}}  \leq C\left( {\left| {\nabla u} \right|_2^{\frac{3}{2}} + \left| {\nabla v} \right|_2^{\frac{3}{2}} + \left| {\nabla w} \right|_2^{\frac{3}{2}}} \right).
		\end{equation*}
		By $(V1)$ we have
		$$
			\int_{{\mathbb{R}^3}} {{V_1}(x){u^2} \geq  {c_1}a} ,\int_{{\mathbb{R}^3}} {{V_2}(x){v^2} \geq  {c_2}b} ,\int_{{\mathbb{R}^3}} {{V_3}(x){w^2} \geq  {c_3}c} ,
		$$
		where $c_i,i=1,2,3$ were defined in $(V1)$.
		Thus for any $(u,v,w)\in S(a,b,c)  $,
		\begin{equation}\label{qiangzhi}
			\begin{aligned}
				J(u,v,w) \geq & \frac{1}{2}\left( {\int_{{\mathbb{R}^3}} {{{\left| {\nabla u} \right|}^2}}  + \int_{{\mathbb{R}^3}} {{{\left| {\nabla v} \right|}^2}}  + \int_{{\mathbb{R}^3}} {{{\left| {\nabla w} \right|}^2}} } \right)  \\&
				- \frac{{{\mu _1}C({p},{a})}}{{{p}}}\left| {\nabla u} \right|_2^{\frac{{3({p} - 2)}}{2}} - \frac{{{\mu _2}C({p},{b})}}{{{p}}}\left| {\nabla v} \right|_2^{\frac{{3({p} - 2)}}{2}} - \frac{{{\mu _3}C({p},{c})}}{{{p}}}\left| {\nabla w} \right|_2^{\frac{{3({p} - 2)}}{2}}  \\&
				- C\left( {\left| {\nabla u} \right|_2^{\frac{3}{2}} + \left| {\nabla v} \right|_2^{\frac{3}{2}} + \left| {\nabla w} \right|_2^{\frac{3}{2}}} \right)-C(c_1,c_2,c_3,a,b,c).
			\end{aligned}
		\end{equation}
		Since $\left| u \right|_2^2,\left| v \right|_2^2,\left| w \right|_2^2$ are fixed in $S(a,b,c)$, combing with $p < \frac{10}{3}$ we know $J(u,v,w)$ is coercive and bounded from below on $S(a,b,c) $. 
	\end{proof}
	\begin{lemma}\label{m_infty}
		$m_\infty(a,b,c) >-\infty.$
	\end{lemma}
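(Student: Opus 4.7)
The plan is to imitate the argument in Lemma~\ref{coercive}, but now on $S_\infty(a,b,c)$ where there is no potential term to handle. Since the only terms distinguishing $J_\infty$ from $J$ are the three potentials $\int V_i \cdot^2$, and those added only a constant $C(c_1,c_2,c_3,a,b,c)$ to the lower bound of $J$, the same kind of Gagliardo--Nirenberg/Young bookkeeping should give boundedness from below directly.

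Concretely, I would first note that for any $(u,v,w)\in S_\infty(a,b,c)$, the $L^2$-norms are fixed at $a$, $b$, $c$, so Lemma~\ref{GN} with $q=p\in(2,10/3)$ gives
$$|u|_p^p\le C(p,a)\,|\nabla u|_2^{3(p-2)/2},$$
and analogously for $v$ and $w$. Since $2<p<10/3$ we have $3(p-2)/2<2$, so each of these three terms grows strictly slower than $|\nabla\cdot|_2^2$. Next, for the cubic coupling, I would bound via Young
$$\beta\int_{\mathbb{R}^3} uvw\le \tfrac{\beta}{3}\bigl(|u|_3^3+|v|_3^3+|w|_3^3\bigr),$$
and then apply Lemma~\ref{GN} at $q=3$, which yields $|u|_3^3\le C|\nabla u|_2^{3/2}$ with the fixed mass $a$ absorbed into the constant (and similarly for $v,w$). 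Again $3/2<2$.

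Combining these estimates in the definition of $J_\infty$ gives, for some constant $C>0$ depending only on $p,a,b,c,\mu_i,\beta$,
$$J_\infty(u,v,w)\ge \tfrac12\bigl(|\nabla u|_2^2+|\nabla v|_2^2+|\nabla w|_2^2\bigr)-C\sum_{\phi\in\{u,v,w\}}\!\!\bigl(|\nabla \phi|_2^{3(p-2)/2}+|\nabla \phi|_2^{3/2}\bigr).$$
Because every negative exponent is strictly less than $2$, the function $t\mapsto \tfrac12 t^2-Ct^{3(p-2)/2}-Ct^{3/2}$ is bounded from below on $[0,\infty)$; applying this to each of $|\nabla u|_2,|\nabla v|_2,|\nabla w|_2$ separately produces a uniform lower bound for $J_\infty$ on $S_\infty(a,b,c)$, hence $m_\infty(a,b,c)>-\infty$.

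I do not anticipate a genuine obstacle: the subcriticality of $p$ and the use of $|u|_3^3$ as the pivot for the triple product make the argument almost identical to Lemma~\ref{coercive}, with the trivial simplification that the potential contribution disappears (so we do not even need to invoke $(V1)$). The only thing to be slightly careful about is that $S_\infty(a,b,c)$ is larger than $S(a,b,c)$, but since the estimates above depend only on the prescribed $L^2$-masses and on $|\nabla\cdot|_2$, they apply verbatim to every element of $S_\infty(a,b,c)$.
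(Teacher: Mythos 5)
Your proof is correct and is exactly what the paper intends: the paper's proof of this lemma is just the remark that it is ``similar to that of Lemma \ref{coercive}'', i.e.\ the same Gagliardo--Nirenberg/Young estimates with the potential terms simply absent, which is precisely what you carried out. The only point worth noting is that your exponent bookkeeping ($3(p-2)/2<2$ and $3/2<2$ for $2<p<10/3$) is the heart of the matter, and you handled it correctly.
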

	\begin{proof}
		The proof is similar to that of Lemma \ref{coercive}.
	\end{proof}
	In the proof of Theorems \ref{th2}, the most important step is to verify the convergence of minimizing sequences. Here we recall the following lemma without proof, due to Brezis and Lieb \cite{brezis1983relation}.
	\begin{lemma}\label{error}
		Suppose $\left\{\left(u_n, v_n, w_n\right)\right\} \subset H$ is a bounded sequence, $\left(u_n, v_n, w_n\right) \rightharpoonup (u, v, w)$ in $H$, then for $1<s<\infty$, we have
		$$
		\begin{gathered}
			\lim _{n \rightarrow \infty} \int_{ \mathbb{R}^{3}}\left|\nabla u_{n}\right|^{2}-|\nabla u|^{2}-\left|\nabla\left(u_{n}-u\right)\right|^{2}  =0, \lim _{n \rightarrow \infty} \int_{ \mathbb{R}^{3}}\left|\nabla v_{n}\right|^{2}-|\nabla v|^{2}-\left|\nabla\left(v_{n}-v\right)\right|^{2}  =0 , \\
			\lim _{n \rightarrow \infty} \int_{ \mathbb{R}^{3}}\left|\nabla w_{n}\right|^{2}-|\nabla w|^{2}-\left|\nabla\left(w_{n}-w\right)\right|^{2}  =0 ,\\
			\lim _{n \rightarrow \infty} \int_{ \mathbb{R}^{3}}\left|u_{n}\right|^{s}-|u|^{s}-\left|u_{n}-u\right|^{s}  =0, \lim _{n \rightarrow \infty} \int_{ \mathbb{R}^{3}}\left|v_{n}\right|^{s}-|v|^{s}-\left|v_{n}-v\right|^{s}  =0 , \\		
			\lim _{n \rightarrow \infty} \int_{ \mathbb{R}^{3}}\left|w_{n}\right|^{s}-|w|^{s}-\left|w_{n}-w\right|^{s}  =0.	
		\end{gathered}
		$$
	\end{lemma}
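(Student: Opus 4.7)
The plan is to reduce the six identities to the classical Brezis-Lieb lemma applied componentwise. Since the three coordinates are decoupled in each identity I treat only the $u$-component; the arguments for $v_n$ and $w_n$ are identical. The three gradient identities require only the Hilbert space structure of $L^2(\mathbb{R}^3;\mathbb{R}^3)$: weak convergence $(u_n,v_n,w_n) \rightharpoonup (u,v,w)$ in $H$ yields in particular $\nabla u_n \rightharpoonup \nabla u$ in $L^2(\mathbb{R}^3;\mathbb{R}^3)$, so that
$$\int_{\mathbb{R}^3} |\nabla u_n|^2 - |\nabla u|^2 - |\nabla(u_n - u)|^2 = 2\int_{\mathbb{R}^3} \nabla u \cdot \nabla(u_n - u) \longrightarrow 0,$$
because $\nabla u \in L^2$ is a fixed test element paired against the weakly null sequence $\nabla(u_n - u)$.

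For the three $L^s$ identities I invoke the Brezis-Lieb lemma. The bounded sequence $\{u_n\} \subset H^1(\mathbb{R}^3)$ admits, via Rellich-Kondrachov on an exhausting sequence of balls combined with a diagonal extraction, a subsequence converging to $u$ almost everywhere on $\mathbb{R}^3$. Using the pointwise inequality
$$\big| |a+b|^s - |b|^s - |a|^s\big| \leq \epsilon |a|^s + C_\epsilon |b|^s, \quad a,b \in \mathbb{R},$$
valid for every $\epsilon > 0$ with some $C_\epsilon > 0$, I set $a = u_n - u$, $b = u$, and define
$$f_{n,\epsilon}(x) := \Big(\big| |u_n|^s - |u_n - u|^s - |u|^s\big| - \epsilon|u_n - u|^s\Big)_+.$$
Then $0 \leq f_{n,\epsilon} \leq C_\epsilon |u|^s \in L^1(\mathbb{R}^3)$ and $f_{n,\epsilon} \to 0$ a.e., so dominated convergence gives $\int f_{n,\epsilon} \to 0$, whence
$$\limsup_{n \to \infty} \int_{\mathbb{R}^3} \big| |u_n|^s - |u_n - u|^s - |u|^s\big| \leq \epsilon \sup_n |u_n - u|_s^s;$$
sending $\epsilon \to 0$ closes the argument.

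The main obstacle is bookkeeping rather than analysis. The a.e. convergence invoked in the Brezis-Lieb step is only produced along a subsequence, whereas the statement asks for the identities along the full sequence; this is reconciled by the standard double-subsequence principle, since every subsequence of $(u_n)$ admits a further sub-subsequence to which the argument applies, forcing the full sequence to converge. A secondary subtlety is that $H^1$-boundedness yields $L^s$-boundedness only for $s \in [2,6]$ in dimension three, so for general $s > 1$ one must read the hypothesis as implicitly including $L^s$-boundedness of $\{u_n\}$; this is immaterial for the paper, since only $s = p \in (2, 10/3)$ and $s = 3$ (entering through the Young inequality in Lemma \ref{coercive}) are ever used.
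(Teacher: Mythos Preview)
Your argument is correct and is precisely the standard Brezis--Lieb reasoning that the paper has in mind: the paper does not prove this lemma at all but simply cites Brezis and Lieb \cite{brezis1983relation}, so your proposal supplies exactly the missing details. Your observation that $H^1$-boundedness only yields $L^s$-boundedness for $s\in[2,6]$ is a fair criticism of the statement as written, and your remark that only $s=p\in(2,10/3)$ and $s=3$ are actually used in the paper is accurate.
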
 	
	\begin{lemma}\label{B-L}
		Assume $(u_n,v_n,w_n) \rightharpoonup (u,v,w)$ in $H$, then
		\begin{equation}
			\mathop {\lim }\limits_{n \to \infty } \int_{{{\mathbb{R}}^3}} {{u_n}} {v_n}{w_n} - uvw - \left( {{u_n} - u} \right)\left( {{v_n} - v} \right)\left( {{w_n} - w} \right)  = 0. 
		\end{equation}
	\end{lemma}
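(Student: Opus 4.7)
The plan is to expand the integrand into a sum of six cross-terms and dispatch each separately, using weak $L^2$-convergence for the ``linear'' ones and the local compactness of the Sobolev embedding for the ``bilinear'' ones. Writing $a_n := u_n - u$, $b_n := v_n - v$, $c_n := w_n - w$ (so $a_n,b_n,c_n \rightharpoonup 0$ in $H^1(\mathbb{R}^3)$), a direct algebraic expansion gives
$$u_n v_n w_n - uvw - a_n b_n c_n = (vw)\,a_n + (uw)\,b_n + (uv)\,c_n + u\,b_n c_n + v\,a_n c_n + w\,a_n b_n,$$
so it suffices to prove that each of the six cross-term integrals on $\mathbb{R}^3$ tends to zero.

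The three linear terms are immediate. By the Sobolev embedding $H^1(\mathbb{R}^3)\hookrightarrow L^4(\mathbb{R}^3)$ combined with H\"older, each of $vw$, $uw$, $uv$ lies in $L^2(\mathbb{R}^3)$. Since the continuous embedding $H^1\hookrightarrow L^2$ transports weak convergence, we have $a_n,b_n,c_n\rightharpoonup 0$ in $L^2(\mathbb{R}^3)$, and hence $\int vw\,a_n$, $\int uw\,b_n$, $\int uv\,c_n$ all converge to zero by the very definition of weak convergence.

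The three bilinear terms are the real obstacle, since a product like $b_n c_n$ need not converge weakly against $u$; weak $H^1$-convergence alone is insufficient. I would handle $\int u\,b_n c_n$ (the others being analogous by symmetry) via a truncation-plus-local-compactness argument. Given $\varepsilon>0$, use $u\in L^3(\mathbb{R}^3)$ to choose $R$ so large that $|u|_{L^3(\mathbb{R}^3\setminus B_R)}<\varepsilon$; then H\"older together with the uniform $H^1$-bound (hence $L^3$-bound) on $(b_n,c_n)$ gives
$$\left|\int_{\mathbb{R}^3\setminus B_R} u\,b_n c_n\right| \le |u|_{L^3(\mathbb{R}^3\setminus B_R)}\,|b_n|_3\,|c_n|_3 \le C\varepsilon.$$
On the ball $B_R$, the Rellich–Kondrachov compact embedding $H^1(B_R)\hookrightarrow L^3(B_R)$ yields $b_n\to 0$ strongly in $L^3(B_R)$, so that
$$\left|\int_{B_R} u\,b_n c_n\right| \le |u|_3\,|b_n|_{L^3(B_R)}\,|c_n|_3 \longrightarrow 0$$
as $n\to\infty$. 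Combining the two estimates and sending $\varepsilon\to 0$ finishes the argument.
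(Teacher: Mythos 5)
Your proposal is correct, but it takes a genuinely different route from the paper. The paper follows the Brezis--Lieb template (as in the lemmas it cites): a pointwise Young-type inequality on $(b_1+b_2)(c_1+c_2)(d_1+d_2)-b_1c_1d_1$, the definition of the truncated positive part $f_n^\varepsilon$, which is dominated by the fixed integrable function $|uvw|+\tfrac{\varepsilon^{-2}}{3}\left(2|u|^3+2|v|^3+3|w|^3\right)$, then dominated convergence to get $\int f_n^\varepsilon\to 0$, and finally the $H$-boundedness of the sequence together with the arbitrariness of $\varepsilon$. You instead exploit the exact trilinear structure: expanding $u_nv_nw_n-uvw-(u_n-u)(v_n-v)(w_n-w)$ into six cross terms, killing the terms linear in the remainders by weak $L^2$-convergence tested against the fixed products $vw,uw,uv\in L^2(\mathbb{R}^3)$ (valid since $H^1(\mathbb{R}^3)\hookrightarrow L^4$), and killing the terms quadratic in the remainders by splitting $\mathbb{R}^3$ into $B_R$ and its complement, using the smallness of the $L^3$-tail of the fixed factor outside and the Rellich--Kondrachov compact embedding $H^1(B_R)\hookrightarrow L^3(B_R)$ inside. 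Your argument is more elementary and self-contained: it needs no pointwise a.e.\ convergence (which the paper's dominated-convergence step implicitly uses, extracted via local compactness and a subsequence argument that the paper glosses over), and every step is quantitative. The paper's $\varepsilon$-truncation scheme, on the other hand, is the standard machinery that extends unchanged to nonlinearities admitting no multilinear expansion (e.g.\ $|u|^p$-type terms), which is presumably why the authors follow it; for this particular trilinear coupling both approaches are perfectly adequate.
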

\begin{proof}
	We follow the idea of Lemma 2.3 of \cite{chen2015existence} and Lemma 3.2 of \cite{liang2020existence}. For any $b_1,b_2,c_1,c_2,d_1,d_2 \in \mathbb{R}$ and $\epsilon>0$, the Young's inequality leads to 
	\begin{equation}\label{chu}
		\begin{aligned}
					\left| {({b_1} + {b_2})({c_1} + {c_2})({d_1} + {d_2}) - {b_1}{c_1}{d_1}} \right| \leq& \left| {{b_1}{c_2}{d_1}} \right| + \left| {{b_2}{c_1}{d_1}} \right| + \left| {{b_2}{c_2}{d_1}} \right| + \left| {{b_1}{c_1}{d_2}} \right| \hfill \\&
					+ \left| {{b_1}{c_2}{d_2}} \right| + \left| {{b_2}{c_2}{d_1}} \right| + \left| {{b_2}{c_2}{d_2}} \right| \hfill \\
					\leq & \varepsilon \left( {{{\left| {{b_1}} \right|}^3} + {{\left| {{c_1}} \right|}^3} + {{\left| {{d_1}} \right|}^3}} \right) + \frac{\varepsilon }{3}\left( {2{{\left| {{b_2}} \right|}^3} + 2{{\left| {{c_2}} \right|}^3} + {{\left| {{d_2}} \right|}^3}} \right)\\& + \frac{{{\varepsilon ^{ - 2}}}}{3}\left( {2{{\left| {{b_2}} \right|}^3} + 2{{\left| {{c_2}} \right|}^3} + {3{\left| {{d_2}} \right|}^3}} \right) \hfill \\ 
		\end{aligned} 
	\end{equation}
	Letting $b_1:=u_n-u,b_2=u$; $c_1:=v_n-v,c_2=v$; $d_1:=w_n-w,d_2=w$ and substituting them into the estimate (\ref{chu}), we have
	\begin{equation}
	\begin{aligned}
			f_n^\varepsilon : = &[\left| {{u_n}{v_n}{w_n} - ({u_n} - u)({v_n} - v)({w_n} - w) - uvw} \right| \hfill \\
			\\&- \varepsilon \left( {{{\left| {{u_n} - u} \right|}^3} + {{\left| {{v_n} - v} \right|}^3} + {{\left| {{w_n} - w} \right|}^3}} \right) - \frac{\varepsilon }{3}\left( {2{{\left| u \right|}^3} + 2{{\left| v \right|}^3} + {{\left| w \right|}^3}} \right){]^ + }  \\ 
			\leq & \left| {uvw} \right| + \frac{{{\varepsilon ^{ - 2}}}}{3}\left( {2{{\left| u \right|}^3} + 2{{\left| v \right|}^3} + {3{\left| w \right|}^3}} \right), \hfill \\ 
	\end{aligned} 
	\end{equation}
	where $u^+(x):=\max \{u(x),0\}$. Then the dominated convergence theorem implies that 
	\begin{equation}\label{o(n)}
	\int_{{\mathbb{R}^3}} {f_n^\varepsilon  \to 0\ ,as\ n \to \infty } .
	\end{equation}
	Since 
	\begin{equation*}
		\begin{aligned}
			&\left| {{u_n}{v_n}{w_n} - ({u_n} - u)({v_n} - v)({w_n} - w) - uvw} \right| \\ \leq&  f_n^\varepsilon  + \varepsilon \left( {{{\left| {{u_1} - u} \right|}^3} + {{\left| {{v_1} - v} \right|}^3} + {{\left| {{w_1} - w} \right|}^3}} \right)
			+\frac{\varepsilon }{3}\left( {2{{\left| u \right|}^3} + 2{{\left| v \right|}^3} + {{\left| w \right|}^3}} \right),
		\end{aligned}
	\end{equation*}
	the boundedness of ${(u_n,v_n,w_n)}$ in $H$, the formula (\ref{o(n)}) and the arbitrariness of $\epsilon$ deduce that 
	$$
	\int_{{\mathbb{R}^3}} {({u_n}{v_n}{w_n} - ({u_n} - u)({v_n} - v)({w_n} - w)) = \int_{{\mathbb{R}^3}} {uvw}  + o_n(1)} .
	$$
	The proof is complete.
\end{proof}
	If we want the solution of (\ref{key1})-(\ref{key2}) to be positive, we need the following lemma.
	\begin{lemma}\label{feifulie}
		Let $\{(u_n, v_n, w_n)\} \subset S(a,b,c) $ be a minimizing sequence for $m(a,b,c)$. Then for $\beta > 0$, $\{(|u_n|, |v_n|, |w_n|)\}$ is also a minimizing sequence.
	\end{lemma}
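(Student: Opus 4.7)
The plan is to show that replacing each component by its absolute value preserves the constraint and all terms of $J$ except for the cubic coupling term $-\beta\int uvw$, which can only decrease (or stay the same) under the substitution.

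First, I would verify that $(|u_n|,|v_n|,|w_n|) \in S(a,b,c)$. The membership in $\mathcal{H}$ is immediate since $V_i(x)|u_n|^2 = V_i(x) u_n^2$, so the finiteness of the weighted integrals is preserved. Likewise, $\int_{\mathbb{R}^3} |u_n|^2 = \int_{\mathbb{R}^3} u_n^2 = a$ and similarly for $v_n, w_n$, so the $L^2$ constraints are maintained. For the $H^1$-regularity of $|u_n|$, I invoke the standard fact that $u \in H^1(\mathbb{R}^3)$ implies $|u| \in H^1(\mathbb{R}^3)$ with $\big|\nabla |u|\big| = |\nabla u|$ almost everywhere.

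Next, I would compare $J(|u_n|,|v_n|,|w_n|)$ with $J(u_n,v_n,w_n)$ term by term. The kinetic terms $\int |\nabla u_n|^2$ are invariant under $u_n \mapsto |u_n|$ by the remark above, and similarly for $v_n, w_n$. The potential terms $\int V_i(x) u_n^2$ are invariant, and the $L^p$ terms $\int |u_n|^p$ are trivially invariant. The only term that changes is the coupling term: since $\beta > 0$,
\begin{equation*}
-\beta \int_{\mathbb{R}^3} |u_n|\,|v_n|\,|w_n| \le -\beta \int_{\mathbb{R}^3} u_n v_n w_n,
\end{equation*}
because $u_n v_n w_n \le |u_n v_n w_n| = |u_n|\,|v_n|\,|w_n|$ pointwise. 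Combining everything gives $J(|u_n|,|v_n|,|w_n|) \le J(u_n,v_n,w_n)$.

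Finally, since $(|u_n|,|v_n|,|w_n|) \in S(a,b,c)$, the definition of $m(a,b,c)$ yields $m(a,b,c) \le J(|u_n|,|v_n|,|w_n|) \le J(u_n,v_n,w_n) \to m(a,b,c)$, so by the squeeze principle $J(|u_n|,|v_n|,|w_n|) \to m(a,b,c)$, i.e., $\{(|u_n|,|v_n|,|w_n|)\}$ is a minimizing sequence. There is no real obstacle here; the only point requiring mild care is the chain rule identity $|\nabla |u|| = |\nabla u|$ a.e., which is classical and I would just cite it.
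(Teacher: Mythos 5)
Your proposal is correct and follows essentially the same route as the paper: compare $J(|u_n|,|v_n|,|w_n|)$ with $J(u_n,v_n,w_n)$ term by term, noting that only the kinetic and coupling terms can change and both move in the favorable direction, then conclude by the infimum/squeeze argument. Your version is slightly more careful than the paper's in explicitly checking that $(|u_n|,|v_n|,|w_n|)\in S(a,b,c)$ and in spelling out the final limit step, but the substance is identical.
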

	\begin{proof}
		Since
		\begin{equation*}
			\int_{\mathbb{R}^{3}}|\nabla| u_{n} \|^{2}  \leq \int_{\mathbb{R}^3}\left|\nabla u_{n}\right|^{2} , \int_{\mathbb{R}^{3}}|\nabla| v_{n} \|^{2}  \leq \int_{\mathbb{R}^3}\left|\nabla v_{n}\right|^{2} , 
			\int_{\mathbb{R}^{3}}|\nabla| w_{n} \|^{2}  \leq \int_{\mathbb{R}^3}\left|\nabla w_{n}\right|^{2} , 
		\end{equation*}
		\begin{equation*}
			\int_{\mathbb{R}^3} u_{n} v_{n} w_n  \leq \int_{\mathbb{R}^3}\left|u_{n}\right| \left|v_{n}\right|  \left|w_{n}\right|,
		\end{equation*}
		we have
		$$
		\begin{aligned}
			{J }\left( {\left| {{u_n}} \right|,\left| {{v_n}} \right|,\left| {{w_n}} \right|} \right) =& \frac{1}{2}\int_{{\mathbb{R}^3}} {\left( {{{\left| {\nabla \left| u \right|} \right|}^2} + {V_1}(x){u^2}} \right)}  + \frac{1}{2}\int_{{\mathbb{R}^3}} {\left( {{{\left| {\nabla \left| v \right|} \right|}^2} + {V_2}(x){v^2}} \right)} \\ & + 
			\frac{1}{2}\int_{{\mathbb{R}^3}} {\left( {{{\left| {\nabla \left| w \right|} \right|}^2} + {V_3}(x){w^2}} \right)}    - \frac{1}{{{p}}}\int_{{{\mathbb R}^3}} {{\mu _1}{{\left| {{u_n}} \right|}^{{p}}}}  - \frac{1}{{{p}}}\int_{{{\mathbb R}^3}} {{\mu _2}{{\left| {{v_n}} \right|}^{{p}}}}  \\&- \frac{1}{{{p}}}\int_{{{\mathbb R}^3}} {{\mu _3}{{\left| {{w_n}} \right|}^{{p}}}}  - \beta \int_{{{\mathbb R}^3}} {\left| {{u_n}} \right|} \left| {{v_n}} \right|\left| {{w_n}} \right|  \\
			\leq &   \frac{1}{2}\int_{{\mathbb{R}^3}} {\left( {{{\left| {\nabla u} \right|}^2} + {V_1}(x){u^2}} \right)}  + \frac{1}{2}\int_{{\mathbb{R}^3}} {\left( {{{\left| {\nabla v} \right|}^2} + {V_2}(x){v^2}} \right)}\\&   + \frac{1}{2}\int_{{\mathbb{R}^3}} {\left( {{{\left| {\nabla w} \right|}^2} + {V_3}(x){w^2}} \right)}    - \frac{1}{{{p}}}\int_{{{\mathbb R}^3}} {{\mu _1}{{\left| {{u_n}} \right|}^{{p}}}}  - \frac{1}{{{p}}}\int_{{{\mathbb R}^3}} {{\mu _2}{{\left| {{v_n}} \right|}^{{p}}}}  \\&- \frac{1}{{{p}}}\int_{{{\mathbb R}^3}} {{\mu _3}{{\left| {{w_n}} \right|}^{{p}}}}  - \beta \int_{\mathbb {R}^3} {{u_n}} {v_n}{w_n}  \\
			=&  {J}\left( {{u_n},{v_n},{w_n}} \right).
		\end{aligned}
		$$
		Thus, $J\left(\left|u_{n}\right|,\left|v_{n}\right|,\left|w_{n}\right| \right) \leq J\left(u_{n}, v_{n},w_n \right)$. Therefore, when $\beta>0$, $\left\{\left(\left|u_{n}\right|,\left|v_{n}\right|,\left|w_{n}\right|\right)\right\}$ is also a minimizing sequence.
	\end{proof}
	
	\section{Proof of Theorem \ref{th2}} 
	The following two lemmas are critical for the proof of theorem \ref{th2}.
	\begin{lemma}\label{wei_shi_jing_xing}
		Suppose $\mu_i>0,i=1,2,3,\beta >0,2<p<10/3$. Assume $V_{i}(x),i=1,2,3$ satisfy $(V1)$ and $(V2)$. Then any minimizing sequence $\left\{\left(u_n, v_n, w_n\right)\right\} \subset S(a,b,c )$ with $J\left(u_n, v_n, w_n \right) \rightarrow m(a,b,c )$, possesses a convergent subsequence in $L^{2}\left(\mathbb{R}^{3}\right) \times L^{2}\left(\mathbb{R}^{3}\right) \times L^{2}\left(\mathbb{R}^{3}\right)$.
	\end{lemma}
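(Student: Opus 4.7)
The plan is to exploit coercivity of each $V_i$ at infinity to obtain $L^2$-tightness, which, combined with weak convergence in $H$ and local compact embedding $H^1_{\mathrm{loc}} \hookrightarrow L^2_{\mathrm{loc}}$, gives strong $L^2$ convergence componentwise. Let $\{(u_n,v_n,w_n)\} \subset S(a,b,c)$ be a minimizing sequence.

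First, by Lemma \ref{coercive}, the sequence is bounded in $H$: the $L^2$-norms are prescribed and coercivity of $J$ together with $J(u_n,v_n,w_n) \to m(a,b,c)$ forces $|\nabla u_n|_2 + |\nabla v_n|_2 + |\nabla w_n|_2 \leq C$. Passing to a subsequence, $(u_n,v_n,w_n) \rightharpoonup (u,v,w)$ in $H$, and $u_n \to u$, $v_n \to v$, $w_n \to w$ in $L^q_{\mathrm{loc}}(\mathbb{R}^3)$ for $q \in [2,6)$.

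Next I would extract a uniform bound on the potential terms. Using the Gagliardo–Nirenberg inequality (Lemma \ref{GN}) together with Young's inequality exactly as in the proof of Lemma \ref{coercive}, the quantities $\mu_i|u_n|_p^p$, $\mu_i|v_n|_p^p$, $\mu_i|w_n|_p^p$ and $\beta\int u_n v_n w_n$ are bounded. Since
\begin{equation*}
\tfrac{1}{2}\sum_{i=1}^{3}\int_{\mathbb{R}^3} V_i\,\cdot\, = J(u_n,v_n,w_n) - \tfrac{1}{2}\bigl(|\nabla u_n|_2^2+|\nabla v_n|_2^2+|\nabla w_n|_2^2\bigr) + \tfrac{\mu_1}{p}|u_n|_p^p + \tfrac{\mu_2}{p}|v_n|_p^p + \tfrac{\mu_3}{p}|w_n|_p^p + \beta\!\int u_n v_n w_n,
\end{equation*}
the right-hand side is bounded above, hence each $\int V_1 u_n^2$, $\int V_2 v_n^2$, $\int V_3 w_n^2$ is bounded above (individually, since $V_i \geq c_i$ implies each term is also bounded below by $c_i\cdot$(prescribed mass)).

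Now I would set $\widetilde{V}_i := V_i - c_i \geq 0$. By $(V2)$, $\widetilde{V}_i(x) \to +\infty$ as $|x|\to\infty$, while by the previous step $\int \widetilde{V}_1 u_n^2 \leq C$ uniformly in $n$, and analogously for $v_n,w_n$. Hence for any $\varepsilon>0$, choosing $R = R(\varepsilon)$ so that $\widetilde{V}_i(x) \geq 1/\varepsilon$ for $|x| > R$, we obtain
\begin{equation*}
\int_{|x|>R} u_n^2 \leq \varepsilon \int_{|x|>R} \widetilde{V}_1 u_n^2 \leq C\varepsilon,
\end{equation*}
uniformly in $n$, and likewise for $v_n$ and $w_n$. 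This is $L^2$-tightness. Combined with local strong convergence $u_n \to u$ in $L^2(B_R)$ (from the compact embedding applied to the $H^1$-bounded sequence), a standard $\varepsilon/3$ argument gives $u_n \to u$, $v_n \to v$, $w_n \to w$ in $L^2(\mathbb{R}^3)$.

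The main obstacle, as the authors themselves flag, is that $c_i$ is not assumed positive, so the quadratic form $\int(|\nabla u|^2 + V_1 u^2)$ need not be a norm and one cannot invoke compactness of the ambient weighted Sobolev embedding directly. The workaround above is precisely to shift by $c_i$ to make the weight nonnegative and then convert the bound on $\int\widetilde{V}_i u_n^2$ into decay at infinity; the prescribed-mass constraint together with $J\to m$ is exactly what supplies this bound. All the other ingredients (Gagliardo–Nirenberg control of $|u_n|_p^p$ and $\int u_n v_n w_n$, local compact embedding) are routine.
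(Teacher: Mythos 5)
Your proof is correct, but it follows a genuinely different route from the paper's. The paper argues by contradiction using the comparison problem at infinity: assuming a positive amount of $L^2$ mass is lost, i.e. $\liminf_n \int (\tilde u_n^2+\tilde v_n^2+\tilde w_n^2)>0$ with $\tilde u_n=u_n-\bar u$ etc., the local $L^2$ vanishing of $(\tilde u_n,\tilde v_n,\tilde w_n)$ together with $(V2)$ forces $\int (V_1 u_n^2+V_2 v_n^2+V_3 w_n^2)\to+\infty$, and then the decomposition $J=J_\infty+\tfrac12\int\sum V_i(\cdot)^2$ combined with $m_\infty(a,b,c)>-\infty$ (Lemma \ref{m_infty}) yields $m(a,b,c)=+\infty$, a contradiction. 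You instead give a direct tightness argument: from $J(u_n,v_n,w_n)\to m$, the gradient bounds and the Gagliardo--Nirenberg control of $|{\cdot}|_p^p$ and $\int u_nv_nw_n$, you bound $\int V_i(\cdot)^2$ above uniformly, shift to $\widetilde V_i=V_i-c_i\ge 0$, and convert the bound into uniform smallness of $\int_{|x|>R}u_n^2$ (and likewise for $v_n,w_n$), which with Rellich's theorem on $B_R$ gives strong $L^2$ convergence. Your route buys a quantitative uniform tightness statement and avoids both the introduction of $J_\infty$, $m_\infty$ and the slightly delicate step in the paper where one passes from $\int V_i\tilde u_n^2\to\infty$ to $\int V_i u_n^2\to\infty$ (which needs a weighted Brezis--Lieb/Cauchy--Schwarz justification the paper leaves implicit); the paper's scheme, on the other hand, is the standard concentration-compactness comparison that would also work in settings where the potentials are merely bounded rather than coercive. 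Two small points to make explicit in your write-up: $m(a,b,c)$ is finite (so $J(u_n,v_n,w_n)$ is indeed bounded above), and in the final $\varepsilon/3$ step the tail of the limit, $\int_{|x|>R}u^2\le\liminf_n\int_{|x|>R}u_n^2\le C\varepsilon$, should be cited via Fatou or weak lower semicontinuity.
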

	\begin{proof} 
		By the coerciveness of $J$ on $S(a,b,c )$, the sequence $\left\{\left(u_n, v_n, w_n \right)\right\}$ is bounded in $\mathcal{H}$. So we may assume that $\left(u_n, v_n, w_n \right) \rightharpoonup(\bar{u}, \bar{v}, \bar{u})$ in $\mathcal{H}$ along a subsequence, and also holds in $H$. 
		Now we verify the compactness of $\left(u_n, v_n, w_n \right)$. 
		Put $\left( {\widetilde{u}_n,\widetilde{v}_n,\widetilde{w}_n } \right) = ({u_n} - \bar u,{v_n} - \bar v,{w_n} - \bar w)$,
		then we have $\left(\widetilde{u}_{n}, \widetilde{v}_{n}, \widetilde{w}_{n}\right) \rightharpoonup 0$ in $H$. 
		We argue by contradiction that
		$$
		\delta:=\liminf _{n \rightarrow \infty} \int_{\mathbb{R}^{3}}\left(\tilde{u}_{n}^{2}+\tilde{v}_{n}^{2}+\tilde{w}_{n}^{2}\right) >0.
		$$
		
		Up to a subsequence, we may assume that $\left(\tilde{u}_{n}, \tilde{v}_{n},\tilde{w}_{n}\right) \rightarrow 0$ in $L_{l o c}^{2}\left(\mathbb{R}^{3}\right) \times L_{\text {loc }}^{2}\left(\mathbb{R}^{3}\right) \times L_{\text {loc }}^{2}\left(\mathbb{R}^{3}\right)$. Then by $(V2)$ we have that
		$$
		\int_{\mathbb{R}^{3}}\left(V_{1}(x) \tilde{u}_{\mathrm{n}}^{2}+V_{2}(x) \tilde{v}_{\mathrm{n}}^{2}+V_{3}(x) \tilde{w}_{\mathrm{n}}^{2}\right)  \rightarrow \infty,
		$$
		which implies that
		$$
		\int_{\mathbb{R}^{3}}\left(V_{1}(x) u_{n}^{2}+V_{2}(x) v_{n}^{2}+V_{3}(x) w_{n}^{2}\right)  \rightarrow \infty.
		$$
		Recalling that $\left\{\left(u_n, v_n, w_n \right)\right\}$ is bounded in $H$ and $(u_n,v_n,w_n) \in S(a,b,c) \subset S_\infty(a,b,c)$, by Lemma \ref{m_infty} we obtain that
		$$
		\begin{aligned}
			m(a,b,c ) &=J\left(u_n, v_n, w_n \right)+o(1) \\
			&=J_{\infty}\left(u_{n}, v_{n},w_n \right)+\frac{1}{2} \int_{\mathbb{R}^{3}} \left( V_{1}(x) u_{n}^{2}+V_{2}(x) v_{n}^{2}+ V_{3}(x) w_{n}^{2}\right)+o(1)  \\
			& \geq m_{\infty}(a,b,c )+\frac{1}{2} \int_{\mathbb{R}^{3}} \left( V_{1}(x) u_{n}^{2}+V_{2}(x) v_{n}^{2} +V_{3}(x) w_{n}^{2}\right) \\
			& \rightarrow+\infty,
		\end{aligned}
		$$
		which is a contradiction. So we obtain that $\left(u_n, v_n, w_n \right) \rightarrow(\bar{u}, \bar{v},\bar{w})$ in $L^{2}\left(\mathbb{R}^{3}\right) \times L^{2}\left(\mathbb{R}^{3}\right) \times L^{2}\left(\mathbb{R}^{3}\right)$. 
	\end{proof}
	\begin{lemma}\label{strong_convergence}
		If $\left(u_{0}, v_{0},w_0\right) \in H$ and $\left\{\left(u_{n}, v_{n},w_n \right)\right\}$ is a minimizing sequence for $m(a,b,c )$ with $\left(u_{n}, v_{n},w_n \right) \rightarrow\left(u_{0}, v_{0},w_0 \right)$ in $L^{p}\left(\mathbb{R}^{3}\right) \times L^{q}\left(\mathbb{R}^{3}\right)\times L^{r}\left(\mathbb{R}^{3}\right)$ for $2 \leq  p, q,r<6$, then $\left(u_{n}, v_{n},w_n \right) \rightarrow\left(u_{0}, v_{0},w_0 \right)$ in $H$.
	\end{lemma}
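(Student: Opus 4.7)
The plan is to combine the standard ``norm convergence from equality in the lower-semicontinuous terms of $J$'' argument with Fatou's lemma applied to the shifted potentials $V_i-c_i\geq 0$. First, by Lemma \ref{coercive} the minimizing sequence is bounded in $\mathcal{H}$, hence in $H$; passing to a subsequence, $(u_n,v_n,w_n)\rightharpoonup(u^*,v^*,w^*)$ weakly in $H$ and pointwise a.e.\ on $\mathbb{R}^3$. Uniqueness of the weak $L^2$-limit combined with the hypothesized strong $L^2$-convergence identifies $(u^*,v^*,w^*)=(u_0,v_0,w_0)$ and yields $|u_0|_2^2=a$, $|v_0|_2^2=b$, $|w_0|_2^2=c$.

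Next I would verify $(u_0,v_0,w_0)\in\mathcal{H}$, so that it lies in $S(a,b,c)$ and $J(u_0,v_0,w_0)$ is well defined. Since $V_1-c_1\geq 0$ by $(V1)$, Fatou yields
\[
\int_{\mathbb{R}^3}(V_1-c_1)u_0^2\leq\liminf_{n\to\infty}\int_{\mathbb{R}^3}(V_1-c_1)u_n^2<\infty,
\]
with finiteness from the $\mathcal{H}$-boundedness of the sequence; adding $c_1 a$ gives $\int V_1 u_0^2<\infty$, and similarly for $v_0,w_0$. Meanwhile the $L^p$-convergence assumption gives $|u_n|_p^p\to|u_0|_p^p$ (and likewise for $v_n,w_n$), while the $L^3$-convergence (available since $3\in[2,6)$) together with H\"older (or directly Lemma \ref{B-L}) gives $\int u_n v_n w_n\to\int u_0 v_0 w_0$. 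Thus all non-quadratic contributions to $J$ converge strongly along the sequence.

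Combining these with the weak lower semicontinuity of the Dirichlet energies $|\nabla\cdot|_2^2$ and the Fatou-based lsc of the potential integrals established above, I would conclude
\[
J(u_0,v_0,w_0)\leq\liminf_{n\to\infty}J(u_n,v_n,w_n)=m(a,b,c),
\]
and the reverse inequality $J(u_0,v_0,w_0)\geq m(a,b,c)$ is automatic since $(u_0,v_0,w_0)\in S(a,b,c)$. Hence equality holds throughout, which forces each individual lsc inequality to be an equality, by the elementary observation that if $\liminf a_n^{(i)}\geq a_0^{(i)}$ for finitely many $i$ and $\lim\sum_i a_n^{(i)}=\sum_i a_0^{(i)}$, then each $a_n^{(i)}\to a_0^{(i)}$. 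In particular $|\nabla u_n|_2\to|\nabla u_0|_2$, and combined with weak $H^1$-convergence and strong $L^2$-convergence this yields $u_n\to u_0$ in $H^1(\mathbb{R}^3)$; the analogous conclusion for $v_n,w_n$ finishes the proof.

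The principal obstacle is the handling of the potential terms $\int V_i u_n^2$: because $(V1)$ only guarantees $V_i\geq c_i$ with $c_i$ possibly negative, these integrals are not manifestly lower semicontinuous, and one must pass through the nonnegative shifts $V_i-c_i$ while carefully tracking the fixed masses $|u_n|_2^2=a$, $|v_n|_2^2=b$, $|w_n|_2^2=c$ to recover both the $\mathcal{H}$-membership of the weak limit and the lsc needed to close the chain of inequalities.
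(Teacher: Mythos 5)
Your proof is correct, and while its overall skeleton (identify the weak limit, sandwich $m(a,b,c)\le J(u_0,v_0,w_0)\le\liminf_n J(u_n,v_n,w_n)=m(a,b,c)$, then extract norm convergence) matches the paper, you handle the decisive technical point --- lower semicontinuity of the potential integrals --- by a genuinely different device. You shift by the lower bound from $(V1)$: Fatou applied to $(V_i-c_i)\ge 0$ gives $\int_{\mathbb{R}^3}(V_1-c_1)u_0^2\le\liminf_n\int_{\mathbb{R}^3}(V_1-c_1)u_n^2$, and the constants cancel because the masses $a,b,c$ are exactly preserved under the assumed strong $L^2$-convergence; this uses only $(V1)$ and measurability of $V_i$, and it also furnishes the membership $(u_0,v_0,w_0)\in\mathcal{H}$, which you verify explicitly while the paper leaves it implicit. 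The paper instead splits $\mathbb{R}^3$ into a ball $B_R$ and its complement, using $(V2)$ to make $V_i\ge 0$ outside $B_R$ (so Fatou applies there) and continuity, hence boundedness, of $V_i$ on $B_R$ together with strong $L^2$-convergence inside; it then runs the minimizing-sequence sandwich twice, first to show the potential terms actually converge and then (via Lemmas \ref{error} and \ref{B-L}) to force convergence of the gradient norms, whereas you run a single sandwich and use the elementary fact that equality in a sum of liminf inequalities forces termwise convergence, which yields the potential and gradient convergences simultaneously. The net effect is that your argument is more elementary and, for this lemma, dispenses with $(V2)$ and the continuity of $V_i$ altogether. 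Two routine points to spell out in a final write-up: apply Fatou along a subsequence realizing the liminf and converging a.e., and since you pass to subsequences, recover convergence of the full sequence by the standard subsequence-of-a-subsequence argument (the limit is the fixed triple $(u_0,v_0,w_0)$); also, for $\int_{\mathbb{R}^3}u_nv_nw_n\to\int_{\mathbb{R}^3}u_0v_0w_0$ either read the hypothesis as giving $L^3$-convergence and use H\"older, or invoke Lemma \ref{B-L} as the paper does, which needs only weak convergence and boundedness.
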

	\begin{proof}
		Since $\left|u_{n}-u_{0}\right|_{2}^{2}+\left|v_{n}-v_{0}\right|_{2}^{2} +\left|w_{n}-w_{0}\right|_{2}^{2}\rightarrow 0$, we have that $\left|u_{0}\right|_{2}^{2}=a,\left|v_{0}\right|_{2}^{2}=b,\left|w_{0}\right|_{2}^{2}=c$.
		We claim that
		$$
		\lim _{n \rightarrow \infty} \int_{\mathbb{R}^{3}}\left(V_{1}(x) u_{n}^{2}+V_{2}(x) v_{n}^{2}+V_{3}(x) w_{n}^{2}\right) =\int_{\mathbb{R}^{3}}\left(V_{1}(x) u_{0}^{2}+V_{2}(x) v_{0}^{2}+(V_{3}(x) w_{0}^{2}\right)  .
		$$
		In fact, since $\mathop {\lim }\limits_{|x| \to \infty }V(x)=\infty$, there is a $R>0$ such that $V(x)\geq 0,\forall x\in \mathbb{R}^3\backslash B_R$ where $B_R:=\{x\in \mathbb{R}^3:|x|<R\} $. By Fatou lemma, 
		$$
		\int_{{\mathbb{R}^3}\backslash {B_R}} {{V_1}(x)u_0^2}  \leq \mathop {\lim }\limits_{n \to \infty } \int_{{\mathbb{R}^3}\backslash {B_R}} {{V_1}(x)u_n^2} .
		$$
		Since $V_i (x)  \in C(\mathbb{R}^3)$, then $0 \leq |V_i (x)| \leq M$, $x\in B_R$. There is ,
		$$
		0 \leftarrow  - M\int_{{B_R}} {\left| {u_n^2 - u_0^2} \right|}  \leq  - \int_{{B_R}} {\left| {{V_1}(x)} \right|\left| {u_n^2 - u_0^2} \right|}  \leq \int_{{B_R}} {{V_1}(x)(u_n^2 - u_0^2) \leq } M\int_{{B_R}} {\left| {u_n^2 - u_0^2} \right|}  \to 0.
		$$
		Therefore
		\begin{equation}\label{down}
			0 \leq \mathop {\lim }\limits_{n \to \infty } \int_{{\mathbb{R}^3}} {{V_1}(x)\left( {u_n^2 - u_0^2} \right)} .
		\end{equation}
		On the other hand, since $\{u_n\}$ is a minimizing sequence for $m(a,b)$, there is
		\begin{equation*}
			\begin{aligned}
				m(a,b) =& \mathop {\lim }\limits_{n \to \infty } J({u_n},{v_n},{w_n}) \hfill \\
				=& J({u_0},{v_0},{w_0}) + \mathop {\lim }\limits_{n \to \infty } \{ \frac{1}{2}\int_{{\mathbb{R}^3}} {\left( {{{\left| {\nabla {u_n}} \right|}^2} - {{\left| {\nabla {u_0}} \right|}^2}} \right)}  + \frac{1}{2}\int_{{\mathbb{R}^3}} {\left( {{{\left| {\nabla {v_n}} \right|}^2} - {{\left| {\nabla {v_0}} \right|}^2}} \right)} \\& + \frac{1}{2}\int_{{\mathbb{R}^3}} {\left( {{{\left| {\nabla {w_n}} \right|}^2} - {{\left| {\nabla {w_0}} \right|}^2}} \right)  }  + \frac{1}{2}\int_{{\mathbb{R}^3}} {\left( {{V_1}(x)u_n^2 - {V_1}(x)u_0^2} \right)}  + \frac{1}{2}\int_{{\mathbb{R}^3}} {\left( {{V_2}(x)v_n^2 - {V_2}(x)v_0^2} \right)}   \\& + \frac{1}{2}\int_{{\mathbb{R}^3}} {\left( {{V_3}(x)w_n^2 - {V_3}(x)w_0^2} \right)} \}   \hfill \\
				\geqslant & m(a,b) + \mathop {\lim \inf }\limits_{n \to \infty } \{ \frac{1}{2}\int_{{\mathbb{R}^3}} {\left( {{V_1}(x)u_n^2 - {V_1}(x)u_0^2} \right)}  + \frac{1}{2}\int_{{\mathbb{R}^3}} {\left( {{V_2}(x)v_n^2 - {V_2}(x)v_0^2} \right)}  \\&+ \frac{1}{2}\int_{{\mathbb{R}^3}} {\left( {{V_3}(x)w_n^2 - {V_3}(x)w_0^2} \right)} \}  , \hfill \\ 
			\end{aligned}
		\end{equation*}
		which implies 
		\begin{equation}\label{up}
			0 \geqslant \mathop {\lim \inf }\limits_{n \to \infty } \{ \frac{1}{2}\int_{{\mathbb{R}^3}} {{V_1}(x)\left( {u_n^2 - u_0^2} \right)}  + \frac{1}{2}\int_{{\mathbb{R}^3}} {{V_2}(x)\left( {v_n^2 - v_0^2} \right)}  + \frac{1}{2}\int_{{\mathbb{R}^3}} {{V_3}(x)\left( {w_n^2 - w_0^2} \right)} \} .
		\end{equation}
		Combing with (\ref{down}) and (\ref{up}), there is a subsequence still written as $\{u_n\}$ such that 
		$$
		\mathop {\lim }\limits_{n \to \infty } \int_{{\mathbb{R}^3}} {({V_1}(x)u_n^2 + {V_2}(x)v_n^2 + {V_3}(x)w_n^2)}  = \int_{{\mathbb{R}^3}} {({V_1}(x)u_0^2 + {V_2}(x)v_0^2 + {V_3}(x)w_0^2)} .
		$$
		Then the claim is proved.
		
		By the coerciveness of $J$ on $S(a,b,c )$, the sequence $\left\{\left(u_{n}, v_{n},w_n \right)\right\}$ is bounded in $H$.
		Up to a subsequence, suppose that $\left(u_{n}, v_{n},w_n \right) \rightharpoonup (\bar{u}, \bar{v},\bar{w}) \in H$. Thanks to $\left(u_{n}, v_{n},w_n \right) \rightarrow\left(u_{0}, v_{0},w_0 \right)$ in
		${L^2}\left( {{\mathbb{R}^3}} \right) \times {L^2}\left( {{\mathbb{R}^3}} \right) \times {L^2}\left( {{\mathbb{R}^3}} \right)$,
		we obtain that $(\bar{u}, \bar{v},\bar{w})=$ $\left(u_{0}, v_{0},w_0 \right)$, hence $\left(u_{n}, v_{n},w_n \right) \rightharpoonup \left(u_{0}, v_{0},w_0 \right)$ in $H$. Thus by Lemma \ref{error} and Lemma \ref{B-L}, we have that
		$$
		\begin{aligned}
			m(a,b,c ) & \leq  J\left(u_{0}, v_{0},w_0 \right) \\
			& \leq  \liminf _{n \rightarrow \infty} J\left(u_{n}, v_{n},w_n \right) \\
			&=m(a,b,c ),
		\end{aligned}
		$$
		which implies
		$$
		\lim _{n \rightarrow \infty} \int_{\mathbb{R}^{3}}\left(\left|\nabla u_{n}\right|^{2}+\left|\nabla v_{n}\right|^{2}+\left|\nabla w_{n}\right|^{2}\right) =\int_{{\mathbb{R}^3}} {\left( {{{\left| {\nabla {u_0}} \right|}^2} + {{\left| {\nabla {v_0}} \right|}^2} + {{\left| {\nabla {w_0}} \right|}^2}} \right)}  .
		$$
		
		Combining this fact with $\left(\nabla u_{n}, \nabla v_{n},\nabla w_{n}\right) \rightharpoonup \left(\nabla u_{0}, \nabla v_{0},\nabla w_{0}\right)$
		weakly in $L^{2}\left(\mathbb{R}^{3}\right) \times   L^{2}\left(\mathbb{R}^{3} \right) \times L^{2}\left(\mathbb{R}^{3}\right)$, 
		we observe that 
		$$
		\left| {\nabla \left( {{u_n} - {u_0}} \right)} \right|_2^2 + \left| {\nabla \left( {{v_n} - {v_0}} \right)} \right|_2^2 + \left| {\nabla \left( {{w_n} - {w_0}} \right)} \right|_2^2 \to 0,
		$$
		and thus
		$$
		\left(u_{n}, v_{n},w_n \right) \rightarrow\left(u_{0}, v_{0},w_0 \right) \ \text { in }\  H.
		$$
	\end{proof}
	\paragraph{Proof of Theorem \ref{th2}:} By Lemma \ref{coercive} we know $J(u,v,w)$ is bounded from below and coercive on $S(a,b,c) $, then we can find a bounded minimizing sequence $\left\{\left(u_{ n}, v_{n},w_{n} \right)\right\}$ for $\left.J\right|_{S(a,b,c) }$. Thus by Gagliardo-Nirenberg inequality Lemma \ref{GN}, we have that
	$$
	\begin{aligned}
		\left|u_{n}-\bar{u}\right|_{p} & \leq  C_{p}\left|\nabla\left(u_{n}-\bar{u}\right)\right|_{2}^{\gamma_{p}}\left|u_{n}-\bar{u}\right|_{2}^{1-\gamma_{p}}, \\
		\left|v_{n}-\bar{v}\right|_{q} & \leq  C_{q}\left|\nabla\left(v_{n}-\bar{v}\right)\right|_{2}^{\gamma_{q}}\left|v_{n}-\bar{v}\right|_{2}^{1-\gamma_{q}},\\
		\left|w_{n}-\bar{w}\right|_{r} & \leq  C_{r}\left|\nabla\left(w_{n}-\bar{w}\right)\right|_{2}^{\gamma_{r}}\left|w_{n}-\bar{w}\right|_{2}^{1-\gamma_{r}}.
	\end{aligned}
	$$
	Then by Lemma \ref{wei_shi_jing_xing} and the boundedness of $\{(u_n,v_n,w_n)\}$, $\left(u_n, v_n, w_n \right) \rightarrow(u, v, w)$ in $L^{p}\left(\mathbb{R}^{3}\right) \times L^{q}\left(\mathbb{R}^{3}\right) \times L^{r}\left(\mathbb{R}^{3}\right), 2 \leq p,q,r<6$. Hence from Lemma \ref{strong_convergence}, $\left(u_n, v_n, w_n \right) \rightarrow(u, v, w)$ in $H$. Furthermore, $(\bar{u}, \bar{v}, \bar{w})$ is a minimizer, i.e., $(\bar{u}, \bar{v},\bar{w}) \in S(a,b,c )$ and $J(\bar{u}, \bar{v},  \bar{w})=m(a,b,c )$. Thus there are $\lambda_{1,0},\lambda_{2,0},\lambda_{3,0} \in \mathbb{R}$ such that $(\bar{u}, \bar{v}, \bar{w})$ satisfies
	\begin{equation*}
			\begin{cases}
				 - \Delta \overline u  + \left( {{V_1}(x) + {\lambda _{1,0}}} \right)\overline u  = {\mu _1}{{\left| {\overline u } \right|}^{{p} - 2}}\overline u  + \beta \overline v \overline w ,& \text { in }\ {\mathbb{R}^3}, \\ 
				 - \Delta \overline v  + \left( {{V_2}(x) + {\lambda _{2,0}}} \right)\overline v  = {\mu _2}{{\left| {\overline v } \right|}^{{p} - 2}}\overline v  + \beta \overline u \overline w ,& \text { in }\ {\mathbb{R}^3}, \\ 
				 - \Delta \overline w  + \left( {{V_3}(x) + {\lambda _{3,0}}} \right)\overline w  = {\mu _3}{{\left| {\overline w } \right|}^{{p} - 2}}\overline w  + \beta \overline u \overline v ,& \text { in }\ {\mathbb{R}^3},
			\end{cases}
	\end{equation*}
	with $|\bar{u}|_{2}^{2}=a,|\bar{v}|_{2}^{2}=b,|\bar{w}|_{2}^{2}=c $, $\bar{u}, \bar{v},  \bar{w} \geq  0$. By the maximum principle, $\bar{u}, \bar{v}, \bar{w}>0$. Combine with Lemma \ref{feifulie}, we finish the proof of Theorem \ref{th2}.$\hfill\Box$

	\bibliographystyle{plain}
	\bibliography{lunwen2_infty}
\end{document}